\documentclass{article}
\usepackage{amsfonts}
\usepackage{amsmath}
\usepackage{amsthm, amscd}
\usepackage{mathtools}
\usepackage{amssymb}
\usepackage{mathrsfs}

\usepackage[alphabetic]{amsrefs}
\usepackage{etex}
\usepackage{hyperref}
\hypersetup{
	colorlinks = true,
	linkcolor  = black
}
\usepackage{color}

\usepackage{tikz}
\usepackage{xypic}

\theoremstyle{plain}\newtheorem{Theorem}{Theorem}[section]
\theoremstyle{plain}
\theoremstyle{plain}\newtheorem{Lemma}[Theorem]{Lemma}
\theoremstyle{plain}\newtheorem{Definition}[Theorem]{Definition}
\theoremstyle{plain}\newtheorem{Proposition}[Theorem]{Proposition}
\theoremstyle{plain}
\theoremstyle{plain}
\theoremstyle{plain}\newtheorem*{Theorem*}{Theorem}

\theoremstyle{remark}\newtheorem{remark}[Theorem]{Remark}

\DeclareMathOperator{\trunk}{trunk}
\DeclareMathOperator{\inte}{int}

\newcommand{\bR}{\mathbb{R}}
\newcommand{\bZ}{\mathbb{Z}}

\newcommand{\frh}{\mathfrak{h}}

\title{The trunk number of satellite knots and Thurston norm}
\author{Zehan Pan}
\begin{document}
\maketitle

\begin{abstract}
	Assume $J\subset \bR ^3$ is a non-trivial knot, and assume $\hat k\subset S^1\times D^2$ is a satellite pattern. Let $N$ be the generalized Thurston norm of the homology class of the meridian disk in $S^1\times D^2$ with respect to $\hat k$.  Let $K$ be the satellite knot of $J$ with pattern $\hat k$.  We show that the trunk number of $K$ is strictly greater than $N$ times the trunk number of $J$.
	
    This paper is written with the help of Prof. Zhenkun Li and Prof. Boyu Zhang.
\end{abstract}

\section{Introduction}

The $trunk\ number$ is a knot invariant defined by Ozawa\cite{Ozawa}. It is closely related to the $width$ of a knot introduced by Gabai\cite{Gabai}. The properties of the trunk number under connected sums were studied by Davies and Zupan\cite{Zupan}. In \cite{Kavi-Wu-Li}, Kavi, Wu, and Li studied the properties of the trunk number of satellite knots: they proved that if $K$ is a satellite knot with companion $J$ and if $J$ is non-trivial, then
$$
trunk(K)>\frac{1}{2}m\cdot trunk(J),
$$
where $m$ denotes the wrapping number of the pattern of $K$. The purpose of this paper is to give another lower bound for the trunk of satellite knots, but using the $generalized\ Thurston\ norm$ (see Section \ref{Statement_result}) instead of wrapping number. Let $N$ be the generalized Thurston norm of the pattern of $K$, and assume $J$ is non-trivial, we show that 

$$
trunk(K)>N\cdot trunk(J)
$$

\subsection{Notation and conventions}
Throughout this paper, all knots, curves, and surfaces are assumed to be smooth, and maps between manifolds are assumed to be smooth.

It will be important for us to differentiate between embedded circles and \emph{isotopy classes} of embedded circles in $S^3$. 
 Therefore, we will  use capitalized letters ($K$, $J$, etc.) to denote isotopy classes of embedded circles (namely, knots) in $S^3$, and use lower case letters ($k$, $j$, etc.) to denote embedded circles in $S^3$.
 
 We will use notations such as $\hat k$, $\hat j$, etc, to denote embedded circles in $S^1\times D^2$. To simplify notation, we also use $\hat V$ to denote $S^1\times D^2$. 
 
 We will identify $S^3$ with the unit sphere $\{(x_1,x_2,x_3,x_4)\in \bR^4|\sum_{i=1}^4 x_i^2 = 1\}$, and define
\begin{center}
 \begin{tabular}{ccc}
	$\frh$:  &$S^3$ &$\to \bR$ \\
 	&$(x_1, x_2, x_3, x_4)$&$\mapsto x_4$
 \end{tabular}
\end{center}
 to be the height function on $S^3$. We call $(0,0,0,1)$, $(0,0,0,-1)\in S^3$ the \emph{north pole} and the \emph{south pole} respectively.
 
 If $X$ is a manifold with finitely many connected components, we use $|X|$ to denote the number of connected components of $X$.

\subsection{Statement of the result}
\label{Statement_result}

Suppose $j\subset \bR ^3$ is a knot such that the restriction of $\frh$ to $j$ is a Morse function with distinct critical values. We also assume that $j$ does not contain the north pole or the south pole of $S^3$. The trunk number of $j$ is defined to be
$$
\trunk(j) = \max_{z} |\frh^{-1}(z)\cap j|.
$$
It is straightforward to see that $\trunk(j)$ is invariant under $C^2$--small perturbations of $j$. 

If $J$ is the isotopy class of a knot, the trunk number of $J$ is defined to be
$$
\trunk(J) = \min_{j} \trunk(j),
$$
where $j$ is taken over all embedded circles in $\bR^3$ in the isotopy class of $J$ such that the restriction of $\frh$ to $j$ is a Morse function with distinct critical values, and that $j$ is disjoint from the north and the south poles.

Now assume $\hat{k}\subset S^1\times D^2$ is a satellite pattern. By definition, this implies that $\hat k$ can not be included in a solid ball in $S^1\times D^2$. 

Recall that we use $\hat V$ to denote $S^1\times D^2$. 
Let $(\Sigma,\partial \Sigma)\subset (\hat V, \partial \hat V)$ be an embedded compact oriented surface that intersects $\hat k$ transversely. If $\Sigma$ is connected, we define 
$$
x_{\hat k}(\Sigma) = \max\{-\chi(\Sigma) + |\Sigma\cap \hat k|,0\}.
$$
If $\Sigma$ is disconnected with connected components $\Sigma_1,\dots, \Sigma_s$, define
$$
x_{\hat k}(\Sigma) = \sum_{i=1}^s x_{\hat k}(\Sigma_i).
$$
For 
$$a\in H_2(\hat V,\partial  \hat V),$$ the \emph{generalized Thurston norm} of $a$ with respect to $\hat k$ is defined to be
$$
x_{\hat{k}} (a) = \min_{\Sigma} x_{\hat k}(\Sigma) ,
$$
where $\Sigma$ is taken over all 
$(\Sigma, \partial \Sigma)\subset (\hat V,  \partial \hat V)$
such that $[\Sigma]=a$ and that $\Sigma$ intersects $\hat k$ transversely. 

We will fix a satellite pattern $\hat k$ from now.
Let $\Sigma$ be an oriented meridian disk in $\hat V$, let $[\Sigma]\in H_2(\hat V, \partial \hat V)$ be its fundamental class, and define
$$
N = x_{\hat k}([\Sigma]).
$$
 It is straightforward to verify that 
 $$x_{\hat k}(a) = x_{\hat k}(-a)
 \quad \text{ for all } \quad
 a\in H_2(S^1\times D^2, S^1\times \partial D^2),$$ 
 so the value of $N$ does not depend on the orientation of $\Sigma$

The main theorem of this paper is the following result.

\begin{Theorem}
	\label{thm_main}
	Assume $J$ is a non-trivial knot, and $K$ is the satellite of $J$ with pattern $\hat k$. Then
	\begin{equation}
		\label{eqn_main_thm}
			\trunk(K) > N\cdot \trunk(J).
	\end{equation}

\end{Theorem}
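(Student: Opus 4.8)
The plan is to prove the inequality directly from the definition of $\trunk(K)$. Since $\trunk(K)$ is the minimum of $\max_z|\frh^{-1}(z)\cap k|$ over all admissible (Morse, distinct critical values, poles avoided) representatives $k$ of $K$, it suffices to fix an \emph{arbitrary} admissible representative $k$ of $K$ and produce a single level $z^\ast$ with $|\frh^{-1}(z^\ast)\cap k|>N\cdot\trunk(J)$. So I would fix such a $k$, together with a companion solid torus $V\subset S^3$ with $\partial V=T$ and an identification $(V,k)\cong(\hat V,\hat k)$; let $j\subset V$ be the core, a representative of $J$. Because $J$ is non-trivial, $T$ is incompressible in $S^3\setminus k$ from both sides. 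The argument rests on two inputs, a \emph{quantitative} one from the generalized Thurston norm and a \emph{combinatorial} one counting meridian disks, which are combined at $z^\ast$.

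For the quantitative input I would first record the monotonicity of $x_{\hat k}$ that we need: any properly embedded meridian disk $D\subset V$ satisfies $[D]=\pm[\Sigma]\in H_2(\hat V,\partial\hat V)$, so by the minimality in the definition of $x_{\hat k}$ and the symmetry $x_{\hat k}(a)=x_{\hat k}(-a)$ we get $x_{\hat k}(D)\ge x_{\hat k}([\Sigma])=N$. Since $D$ is a disk, $x_{\hat k}(D)=\max\{-1+|D\cap\hat k|,0\}$, and hence (assuming $N\ge 1$, the case $N=0$ being trivial) \emph{every} meridian disk of $V$ meets $\hat k$ in at least $N+1$ points. This per-disk estimate is what will force the strict inequality in the theorem.

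For the combinatorial input I would place $T$ in good position relative to $\frh$. After a small isotopy of $T$ in $S^3\setminus k$ (which does not move $k$ and hence does not change any intersection $\frh^{-1}(z)\cap k$), assume $\frh|_T$ is Morse and $T$ meets $\frh^{-1}(z)$ transversely for generic $z$. Each circle of $\frh^{-1}(z)\cap T$ bounds a disk in the level sphere $\frh^{-1}(z)$; taking an innermost such disk and using incompressibility of $T$ in the exterior of $V$ (a non-trivial knot exterior, hence with incompressible boundary) together with irreducibility, one removes by further isotopy of $T$ every inessential circle whose innermost disk is disjoint from $k$, and shows that every \emph{essential} circle of $\frh^{-1}(z)\cap T$ must be a meridian of $V$. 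Writing $F_z=\frh^{-1}(z)\cap V$ (a planar surface with $\partial F_z\subset T$ and $F_z\cap k=\frh^{-1}(z)\cap k$, as $k\subset\inte V$), the surviving meridional boundary circles cut off meridian-disk components of $F_z$. The crux is then to prove that \emph{some} level $z^\ast$ carries at least $\trunk(J)$ disjoint meridian disks. I would establish this in Schubert--Schultens fashion: the meridian disks appearing in the surfaces $F_z$, as $z$ sweeps $S^3$, organize via the Morse structure of $\frh|_T$ into a meridional sweepout of $V$, from which one reads off a position $j'$ of the core with $\max_z|\frh^{-1}(z)\cap j'|$ no larger than the maximal number of meridian disks occurring at a single level; since $j'$ represents $J$, this yields $\trunk(J)\le\max_z\bigl(\#\text{meridian disks in }F_z\bigr)$.

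Finally I would assemble the two inputs at the level $z^\ast$ realizing at least $\trunk(J)$ disjoint meridian disks: each such disk contributes at least $N+1$ points of $F_{z^\ast}\cap k$, and disjointness lets these contributions add, so
\[
|\frh^{-1}(z^\ast)\cap k|\ \ge\ \trunk(J)\cdot(N+1)\ >\ N\cdot\trunk(J).
\]
As $k$ was an arbitrary admissible representative of $K$, every representative satisfies $\max_z|\frh^{-1}(z)\cap k|\ge\trunk(J)\cdot(N+1)$, and taking the minimum over $k$ gives $\trunk(K)\ge\trunk(J)\cdot(N+1)>N\cdot\trunk(J)$, which is \eqref{eqn_main_thm}. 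I expect the main obstacle to be the combinatorial input: controlling the inessential circles of $\frh^{-1}(z)\cap T$ that are pierced by $k$ and therefore cannot be removed, and turning the heuristic ``meridional sweepout'' into an honest position $j'$ of the core whose width bounds $\trunk(J)$ from above --- in other words, verifying that the widest meridional cross-section of $V$ already sees the full trunk of its companion. A secondary technical point is to confirm that the meridional circles of $F_{z^\ast}$ genuinely bound disjoint \emph{disk} components (rather than annular or higher-genus planar pieces), which is precisely where the incompressibility normalization of $T$ is used.
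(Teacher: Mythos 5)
Your high-level strategy --- fix an arbitrary admissible representative $k$, use the generalized Thurston norm to bound from below the intersection of $k$ with each homologically essential piece of a level sphere inside the companion torus $V$, and then find a level carrying at least $\trunk(J)$ such pieces --- is the same as the paper's. However, both of your key inputs have genuine gaps. The first concerns the reduction to meridian \emph{disks}. You assume that after an isotopy of $T$ in $S^3\setminus k$ the essential circles of $\frh^{-1}(z)\cap T$ cut off disk components of $F_z=\frh^{-1}(z)\cap V$, each meeting $k$ in at least $N+1$ points. This normalization is not available: a component of $F_z$ representing the generator of $H_2(V,T)$ is in general a planar surface with $a$ essential boundary circles (any odd $a$ can occur) plus inessential ones, and the norm only gives $|\Sigma\cap k|\ge N+2-a$ (the paper's Lemma \ref{lem_essential_Thurston_norm_bound}), which is weaker than $N+1$ once $a\ge 2$ and can be vacuous for large $a$. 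Inessential circles of $\frh^{-1}(z)\cap T$ whose innermost disks are pierced by $k$ cannot be removed by an isotopy of $T$ rel $k$, and an isotopy simplifying one level generically ruins others, so there is no single position of $T$ in which every level meets $V$ in meridian disks. The paper compensates for the per-piece loss $2-a$ with the combinatorial Lemma \ref{lem_combinatorics_on_sphere} on nested surfaces in $S^2$ (whose hypothesis is supplied by Lemma \ref{lem_component_in_Dc}, using the non-triviality of $J$), which yields $\sum_{n_j\text{ odd}}(2-n_j)>0$ and hence $|\frh^{-1}(r_i)\cap k|>N\cdot z_i$. Your argument has no substitute for this step, and your stronger conclusion $\trunk(K)\ge (N+1)\trunk(J)$ does not follow from what you wrote.

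The second gap is the ``Schubert--Schultens sweepout'' producing a level with at least $\trunk(J)$ principal pieces: this is precisely the hard input, and you only gesture at it (and acknowledge as much). The paper imports it as Theorem \ref{thm_circle_l_in_V} (Guo--Li): there is an embedded circle $l\subset V$ meeting each principal component of each level sphere exactly once and missing the null-homologous components, so the number of principal pieces at the level realizing $\trunk(l)$ is at least $\trunk(l)$. Crucially, $l$ represents $J\# J'$ rather than $J$ itself, so one also needs Zupan's theorem $\trunk(K_1\# K_2)=\max\{\trunk(K_1),\trunk(K_2)\}$ (Theorem \ref{thm_knot_sum}) to conclude $\trunk(l)\ge\trunk(J)$. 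Your variant --- extracting a position $j'$ of the core whose trunk is bounded by the maximal number of meridian disks in a single level --- is not established by anything in your sketch, and it is exactly where the real work lies.
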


\begin{remark}
	The assumption that $J$ is non-trivial is necessary in Theorem \ref{thm_main}. In fact, if $J$ is the unknot and $\hat k$ is the Whitehead double pattern, then $N=1$, and $K$ is also the unknot. So \eqref{eqn_main_thm} does not hold for this case.
\end{remark}
	
	\subsection{Acknowledgement}
    The author wishes to thank Professor Zhenkun Li for proposing the theorem, and is also very grateful to Professor Boyu Zhang for many useful conversations and, especially, help on the homology part. The author would also thank PRISMS (Princeton International School of Math and Science) for its research program.
    
\section{Preliminaries}
\label{sec_preliminaries}
Let $\hat k$, $J$, $K$ be as in Theorem \ref{thm_main}. Recall that we use $\hat V$ to denote $S^1\times D^2$. 
Define
$$
\hat j= S^1\times \{(0,0)\} \subset \hat V
$$
to be the core circle in $\hat V$.

Let $k\subset S^3$ be a representative of $K$ such that $\frh|_k$ is Morse with distinct critical values and that $k$ is disjoint from the north and the south poles of $S^3$.

By the definition of satellite knots, there exists an embedding
\begin{equation}
	\label{eqn_defn_tau}
\tau :\hat V \to S^3
\end{equation}
such that $\tau(\hat j)$ represents the isotopy class $J$ and $\tau(\hat k) = k$. Moreover, we may choose $\tau$ so that its image is disjoint from the north and the south poles.  

Let 
$$V=\tau(\hat V),\quad T = \tau (\partial \hat V).$$ 
Then $V$ is an embedded solid torus in $S^3$, and $T$ is its boundary torus. By the above assumptions, both $V$ and $T$ are disjoint from the north and the south poles.

Perturb $\tau$ near $\partial \hat V$ so that $\frh|_{T}$ is Morse with distinct critical values, and that the critical values of $\frh|_T$ are disjoint from the critical values of $\frh|_k$. Assume 
$$
c_1<c_2<\dots<c_n
$$
are all the critical values of $\frh|_T$ and $\frh|_k$. For each $i=1,2,\dots,n-1$, take $r_i\in (c_i,c_{i+1})$. Then $r_i$ is a regular value of both $\frh|_T$ and $\frh|_k$, and we have 
$$
\trunk(k) = \max_{1\le i \le n-1} |\frh^{-1}(r_i) \cap k|.
$$

Note that $H_2(V,T)= H_2(V,\partial V)\cong \bZ$. We have the following elementary properties about embedded surfaces in $V$.
\begin{Lemma}
	\label{lem_connected_surface_fundamental_class}
	Assume $(\Sigma,\partial \Sigma)\subset (V,T)$ is an embedded connected surface with boundary. Then $[\Sigma]\in H_2(V,T)$ is either zero or a generator of $H_2(V,T)$.
\end{Lemma}

\begin{proof}
	Let $N(\Sigma)$ be a tubular neighborhood of $\Sigma$. Then for each $x\in V\backslash \Sigma$, there is a path in $V\backslash \Sigma$ from $x$ to a point in $N(\Sigma)\backslash \Sigma$. Since $N(\Sigma)\backslash \Sigma$ has two connected components, the space $V\backslash \Sigma$ has at most two connected components.
	
	If $V\backslash \Sigma$ has two connected components, write these components as $M_1$ and $M_2$. Then 
	$$
	[\Sigma] = [\partial M_1] = 0 \in H_2(V,T).
	$$
	
	If $V\backslash \Sigma$ has one connected component, then there exists an embedded circle in the interior of $V$ that intersects $\Sigma$ transversely at one point. Therefore $[\Sigma]$ must be a generator of  $H_2(V,T)$.
\end{proof}

\begin{Definition}
	We say that a simple closed curve $c\subset T$ is \emph{essential} on $T$, if it is not contractible on $T$.  Otherwise, we say that $c$ is \emph{inessential}. 
\end{Definition}

\begin{Lemma}
	\label{lem_nonzero_fundamental_class_odd_components}
	Assume $(\Sigma,\partial \Sigma)\subset (V,T)$ is an embedded connected surface with boundary. Then $[\Sigma]\in H_2(V, T)$ is a generator of $H_2(V,T)$ if and only if there is an odd number of connected  components of $\partial \Sigma$ that are essential on $T$.
\end{Lemma}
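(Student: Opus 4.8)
The plan is to pass to $\bZ/2$ coefficients and read off the class $[\Sigma]$ from the homology classes of its boundary components. First I would record the relevant homology of the pair $(V,T)$. Since $V$ is a solid torus, the long exact sequence of $(V,T)$ together with $H_2(V)=0$ shows that the connecting homomorphism $\partial\colon H_2(V,T)\to H_1(T)$ is injective, with image equal to $\ker\bigl(i_*\colon H_1(T)\to H_1(V)\bigr)$, and this kernel is generated by the meridian $\mu$. Running the same long exact sequence with $\bZ/2$ coefficients gives $H_2(V,T;\bZ/2)\cong\bZ/2$ and an injective map $\partial_{\bZ/2}\colon H_2(V,T;\bZ/2)\to H_1(T;\bZ/2)$. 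Moreover $H_1(V,T)=0$ (the longitude surjects onto $H_1(V)$), so there is no torsion, and the reduction $H_2(V,T;\bZ)\to H_2(V,T;\bZ/2)$ carries a generator to the nonzero element. By Lemma \ref{lem_connected_surface_fundamental_class}, $[\Sigma]$ is either $0$ or a generator, so the two cases can be distinguished purely mod $2$: the class $[\Sigma]$ is a generator of $H_2(V,T;\bZ)$ if and only if its mod-$2$ reduction is nonzero, which by injectivity of $\partial_{\bZ/2}$ happens if and only if $\partial_{\bZ/2}[\Sigma]\neq 0$ in $H_1(T;\bZ/2)$.

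The next step is to compute $\partial_{\bZ/2}[\Sigma]$ from the boundary. Writing $\partial\Sigma=c_1\cup\cdots\cup c_r$, we have $\partial_{\bZ/2}[\Sigma]=\sum_{i=1}^r [c_i]$ in $H_1(T;\bZ/2)$. An inessential $c_i$ bounds a disk in $T$ and so represents $0$. For the essential components I would invoke the standard fact that any two disjoint essential simple closed curves on a torus are isotopic (non-isotopic essential curves have nonzero geometric intersection number $|pq'-p'q|$), so all essential $c_i$ are mutually parallel and hence represent a single common class $\gamma\in H_1(T;\bZ/2)$; furthermore $\gamma\neq 0$, since an essential curve carries a primitive, therefore mod-$2$ nonzero, homology class. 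Consequently $\partial_{\bZ/2}[\Sigma]=s\cdot\gamma$, where $s$ is the number of essential boundary components, and this is nonzero precisely when $s$ is odd. Combining with the previous paragraph gives the desired equivalence.

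The whole argument is homological bookkeeping, so I expect no serious obstacle; the one point requiring genuine care is the claim that all essential boundary curves carry the same $H_1$-class. This is exactly where the embeddedness and mutual disjointness of the $c_i$ enter, through the classification of simple closed curves on $T^2$. I prefer the mod-$2$ route over an integral computation because, working over $\bZ$, one must additionally argue via primitivity that $\partial[\Sigma]=m\gamma$ can generate $\langle\mu\rangle$ only for $m=\pm1$ (forcing $\gamma=\pm\mu$) before concluding the parity statement; the $\bZ/2$ computation bypasses identifying $\gamma$ with $\mu$ altogether and treats the generating and the vanishing cases uniformly.
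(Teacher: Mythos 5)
Your proof is correct and follows essentially the same route as the paper: both arguments run the long exact sequence of the pair $(V,T)$ to reduce the question to the class of $\partial\Sigma$ in $H_1(T)$, and both rely on the key geometric fact that the disjoint essential boundary components are mutually parallel on $T$, hence carry a common primitive class. Your only deviation is packaging the final count mod $2$, which cleanly absorbs the orientation signs and the primitivity point that the paper's integral version handles implicitly via the dichotomy $[\partial\Sigma]\in\{0,\pm[\partial D]\}$.
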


\begin{proof}
	Let $D$ be an oriented meridian of $V$. By Lemma \ref{lem_connected_surface_fundamental_class} and the homology exact sequence for the pair $(V,T)$, we have 
	\begin{enumerate}
		\item $[\partial \Sigma] \in H_1(T) $ is equal to $0$ or $ \pm [\partial D]$, 
		\item $[\Sigma]$ is a generator of $H_2(V,T)$ if and only if $[\partial \Sigma]  = \pm [\partial D]$.
	\end{enumerate}
Since the essential components of $\partial \Sigma$ are all parallel to each other, we conclude that $[\partial \Sigma]  = \pm [\partial D]$ if and only if $\partial \Sigma$ has an odd number of essential components. Hence the result is proved.
\end{proof}

\begin{Lemma}
	\label{lem_essential_Thurston_norm_bound}
	Assume $(\Sigma,\partial \Sigma)\subset(V,T)$ is an embedded connected oriented surface such that $[\Sigma]$ is a generator of $H_2(V,T)$. Also assume that $\Sigma$ intersects $k$ transversely. Let $a$ be the number of connected components of $\partial \Sigma$ that are essential on $T$. Let $b = |k\cap \Sigma|$. Then we have
	$$
	\max\{a+b-2, 0\}\ge N.
	$$
\end{Lemma}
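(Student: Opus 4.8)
The plan is to transport the entire configuration into the model solid torus $\hat V$ and there exhibit an explicit competitor surface for the generalized Thurston norm. Since $\tau\colon \hat V\to V$ is a diffeomorphism onto its image with $\tau(\hat k)=k$, set $\hat\Sigma=\tau^{-1}(\Sigma)$. Then $\hat\Sigma$ is a connected oriented surface properly embedded in $(\hat V,\partial\hat V)$, it meets $\hat k$ transversely in $b=|\hat\Sigma\cap\hat k|$ points, and $[\hat\Sigma]$ generates $H_2(\hat V,\partial\hat V)$ because $[\Sigma]$ generates $H_2(V,T)$. Its boundary has the same number $a$ of essential components, which by Lemma~\ref{lem_nonzero_fundamental_class_odd_components} is odd, so in particular $a\ge 1$; moreover these essential components are parallel copies of the meridian $\partial D$ on $\partial\hat V$. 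Because $x_{\hat k}$ is defined as a minimum over all surfaces in a homology class and $N=x_{\hat k}([\hat\Sigma])$ (using $[\hat\Sigma]=\pm[\Sigma]$ together with the symmetry $x_{\hat k}(a)=x_{\hat k}(-a)$), it suffices to produce a single surface $\Sigma'$ with $[\Sigma']=[\hat\Sigma]$ and $x_{\hat k}(\Sigma')\le a+b-2$. Then $N\le x_{\hat k}(\Sigma')\le a+b-2$, and combined with $N\ge 0$ this yields $\max\{a+b-2,0\}\ge N$.

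To build $\Sigma'$ I would simplify $\hat\Sigma$ through surgeries, tracking on each connected component the quantity $\phi=-\chi+|{\cdot}\cap\hat k|$, so that $x_{\hat k}=\sum\max\{\phi,0\}$. First I would cap off the inessential boundary components: an innermost one bounds a disk $\delta\subset\partial\hat V$, and after isotoping $\hat k$ off a collar of $\partial\hat V$, the disk $\delta$ pushed slightly inward is disjoint from $\hat k$, so capping along it lowers $-\chi$ by $1$ while leaving $b$ fixed, hence lowers $\phi$ by $1$. Iterating, and discarding any closed components produced along the way (these are null-homologous in $\hat V$ and contribute non-negatively to $x_{\hat k}$), one reaches a surface in the same class whose boundary consists only of the $a$ essential meridians, with $\phi$ no larger than before. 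The homology class is preserved throughout, since capping, compressing, and boundary-compressing all fix the class in $H_2(\hat V,\partial\hat V)$, as does deleting closed components.

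Next I would remove the genus and the surplus essential boundary curves, using that $\hat V$ is a solid torus: by Lemma~\ref{lem_connected_surface_fundamental_class} the complement is connected, and a connected generator surface in a solid torus carrying positive genus or more than one meridional boundary curve is compressible or boundary-compressible. The relevant disks I would extract as innermost/outermost pieces of $D\cap\hat\Sigma$ for a fixed meridian disk $D$, whose boundary may be isotoped off the meridional curves of $\partial\hat\Sigma$ so that $D\cap\hat\Sigma$ consists of interior circles. The bookkeeping is governed by the fact that a compression along a disk meeting $\hat k$ in $p$ points changes $\phi$ by $2(p-1)$, while a boundary-compression disjoint from $\hat k$ lowers $\phi$ by $1$ and decreases the essential boundary count. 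The aim is a terminal planar surface $\Sigma'$ representing the generator with $\phi(\Sigma')\le a+b-2$, giving $x_{\hat k}(\Sigma')\le\max\{a+b-2,0\}$ and hence the lemma.

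The hard part is precisely this last step: the cumulative effect of the genus-reducing compressions on $\phi$ is $\sum_i 2(p_i-1)$, so to bring $\phi$ down from its value after capping, namely $2g+a-2+b$, to the target $a+b-2$, one must reduce each unit of genus by a compression that is \emph{essentially disjoint} from $\hat k$, i.e.\ with $p_i=0$. Guaranteeing the existence of such $\hat k$-disjoint genus-reducing disks (or, equivalently, accounting exactly for the trade-off between genus, the number $a$ of essential boundary meridians, and the intersection number $b$) is the crux of the argument, and is where the solid-torus structure and the innermost-disk analysis of $D\cap\hat\Sigma$ must do the real work; a naive innermost disk could meet $\hat k$ several times and raise $\phi$. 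I expect the resolution to require choosing $\hat\Sigma$, or the competitor $\Sigma'$, to be incompressible in the complement of $\hat k$ and then arguing that such a surface is forced to be planar. In the degenerate range $a+b\le 2$ parity forces $a=1$ and $b\le 1$, and the same construction yields a surface with $\phi\le 0$, so $N=0$ and the inequality holds trivially.
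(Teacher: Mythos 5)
Your first stage --- capping off the inessential boundary circles of $\Sigma$ by innermost disks on $T$ pushed slightly into the interior so that each cap lowers $-\chi$ by one while leaving $|k\cap\cdot|$ unchanged, and then invoking the definition of $N$ as a minimum over the homology class --- is exactly the paper's proof, in its entirety. The paper forms the capped surface $\Sigma'$, which has only the $a$ essential (meridional) boundary components and still meets $k$ in $b$ points, and reads off $\max\{a+b-2,0\}=x_{\hat k}(\tau^{-1}(\Sigma'))\ge N$. Note that this equality uses $\chi(\Sigma')=2-a$, i.e.\ that $\Sigma'$ is planar; the paper never says so explicitly, but in the only place the lemma is applied (Proposition \ref{prop_intersection_bound_on_ri}) each $\Sigma_j$ is a component of $\frh^{-1}(r_i)\cap V$, hence a subsurface of a sphere, so planarity is automatic and your first paragraph together with the capping step already finishes the argument.

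The second stage of your proposal, where you try to kill the genus by compressions, is where the genuine gap lies, and you diagnose it correctly yourself: a compression along a disk meeting $\hat k$ in $p$ points changes $\phi=-\chi+|\cdot\cap\hat k|$ by $2(p-1)$, so unless the compressing disks can be chosen disjoint from $\hat k$ you cannot bring $\phi$ from $2g+a+b-2$ down to $a+b-2$, and nothing in the solid-torus structure supplies such disks. This is not an oversight you can repair with a more careful analysis of $D\cap\hat\Sigma$: for positive-genus $\Sigma$ the inequality $\max\{a+b-2,0\}\ge N$ is not a formal consequence of $N$ being a minimum --- for instance, a once-punctured torus in the generator class disjoint from $\hat k$ would have $a=1$, $b=0$, and the lemma as stated would then force $N=0$, which does not follow from anything in the setup. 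The correct resolution is not to prove more but to assume more: add the hypothesis that $\Sigma$ is planar (equivalently, restrict to components of $\frh^{-1}(r_i)\cap V$, as every application does and as the paper's displayed equality implicitly requires); then your stage one is complete and stage two is unnecessary.
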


\begin{proof}
	Let $c_1,\dots,c_s$ be all the connected components of $\partial \Sigma$ that are inessential. Then each $c_i$ bounds an embedded disk on $T$. By a standard innermost disk argument, we find an embedded oriented surface
	$$
	(\Sigma',\partial \Sigma')\subset(V,T),
	$$
	such that 
	\begin{enumerate}
		\item $\partial \Sigma' = \partial \Sigma \backslash (c_1\cup c_2\cup \dots \cup c_s)$,
		\item $\Sigma'$ intersects $k$ transversely,
		\item $k\cap \Sigma = k\cap \Sigma'$,
		\item $[\Sigma'] =[\Sigma]\in H_2(V,T)$. 
	\end{enumerate}
As a consequence, we have 
$$
	\max\{a+b-2, 0\} = x_{\hat{k}} (\tau^{-1}(\Sigma')) \ge N,
$$
where $\tau$ is the map defined in \eqref{eqn_defn_tau}. 
\end{proof}

We will use the following result by \cite{Guo-Li} and \cite{Zupan}.

\begin{Theorem}
\label{thm_circle_l_in_V}
(\cite{Guo-Li}, see also \cite{Kavi-Wu-Li} Proposition 3.3)
There exists an embedded circle $l\subset V$, such that the following holds. 
\begin{enumerate}
	\item The circle $l$ is transverse to $\frh^{-1}(r_i)$ for all $i=1,\dots,n-1$.
	\item Let $L$ be the isotopy class of $l$. Then there exists an isotopy class  $J'$, such that $L=J\# J'$.
	\item If $\Sigma$ is a component of $\frh^{-1}(r_i)\cap V$ such that $[\Sigma]=0\in H_2(V,T)$, then $l$ is disjoint from $\Sigma$. 
	\item If $\Sigma$ is a component of $\frh^{-1}(r_i)\cap V$ such that $[\Sigma]$ is a generator of $H_2(V,T)$, then $l$ intersects $\Sigma$ transversely at one point.
\end{enumerate}
\end{Theorem}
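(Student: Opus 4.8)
The plan is to analyze how the level spheres cut the solid torus $V$ and to read $l$ off from the combinatorics of this decomposition. Set $F_i = \frh^{-1}(r_i)\cap V$. Since the $r_i$ are distinct regular values, the surfaces $F_1,\dots,F_{n-1}$ are pairwise disjoint and properly embedded in $(V,T)$, and by Lemma \ref{lem_connected_surface_fundamental_class} every component of every $F_i$ is either null-homologous or a generator of $H_2(V,T)$. I would record the dual graph $\Gamma$ of this arrangement: its vertices are the closures of the connected components of $V\setminus\bigcup_i F_i$, and its edges are the components of the $F_i$, each edge joining the regions adjacent to that surface. A component $\Sigma$ separates $V$ exactly when the corresponding edge is a bridge of $\Gamma$ (deleting the edge re-glues along $\Sigma$ and disconnects $\Gamma$ iff $\Sigma$ disconnects $V$), so by the proof of Lemma \ref{lem_connected_surface_fundamental_class} the null-homologous components are precisely the bridge edges and the generator components are precisely the non-bridge edges.

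Next I would show $b_1(\Gamma)=1$. For $b_1(\Gamma)\le 1$: every loop in $\Gamma$ lifts to a loop in $V$ crossing the corresponding surfaces, so the collapse $q\colon V\to\Gamma$ induces a surjection $q_*\colon H_1(V)\to H_1(\Gamma)$ and $b_1(\Gamma)\le b_1(V)=1$. For $b_1(\Gamma)\ge 1$ — equivalently, the existence of at least one generator component — is where the nontriviality of $J$ enters. The restriction $\frh|_T$ is Morse on the torus $T$, so some regular level $\frh|_T^{-1}(r_i)$ contains a curve essential on $T$, and all essential curves of $S_i\cap T$ (where $S_i=\frh^{-1}(r_i)$) are mutually parallel. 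Taking such a curve innermost on the sphere $S_i$ and surgering off the inessential curves of $S_i\cap T$ inside it, I obtain an embedded disk whose boundary is essential on $T$ and whose interior is disjoint from $T$, hence lying either in $V$ or in the exterior $E=S^3\setminus\inte(V)$. Because $J$ is nontrivial, $E$ is a nontrivial knot exterior, hence $\partial$-irreducible, so no essential curve of $T$ bounds a disk in $E$; the disk therefore lies in $V$ and is a meridian disk. This forces the common slope of the essential curves to be meridional, and a short bookkeeping with Lemma \ref{lem_nonzero_fundamental_class_odd_components} then yields a generator component of some $F_i$. Hence $b_1(\Gamma)=1$.

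Since $b_1(\Gamma)=1$, the non-bridge edges of $\Gamma$ form a single embedded cycle $(R_0,g_1,R_1,\dots,g_t,R_0)$, whose edges $g_1,\dots,g_t$ are exactly the generator components. I would realize this cycle by a circle: in each region $R_a$ choose a properly embedded arc $\alpha_a$ joining a point of $g_a$ to a point of $g_{a+1}$, with $\alpha_a$ lying in the interior of $R_a$ away from its endpoints. Concatenating the $\alpha_a$ and rounding corners so that $l$ crosses each $g_a$ transversally in a single point produces an embedded circle $l$ meeting each generator component once (property (4)); being interior to the regions, $l$ is disjoint from every bridge edge, i.e. from every null-homologous component (property (3)). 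A final general-position perturbation makes $l$ transverse to all of $\frh^{-1}(r_i)$ without creating new intersections with the $F_i$, giving property (1).

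For property (2), note that $l$ crosses each generator once and no null component, so $[l]$ is the core generator of $H_1(V)$ and $l$ has winding number one. Choosing the generator $g_1$ and compressing it in the complement of $l$ as far as possible yields a meridian disk $D_0$ met by $l$ transversally in a single point, so $l$ has wrapping number one in $V$; cutting $(V,l)$ along $D_0$ produces a ball containing one properly embedded arc whose local knot type $J'$ is well defined, and regluing exhibits $L$ as a wrapping-number-one pattern on $J$, namely $L=J\# J'$. The main obstacle, I expect, is the organizational heart: proving that the generator components assemble into a single cycle of $\Gamma$ (the two Betti-number estimates, and especially the use of $\partial$-irreducibility of the exterior of the nontrivial knot $J$ to guarantee a generator), together with the simultaneous bookkeeping needed to route the arcs $\alpha_a$ and perform the compression to $D_0$ while keeping all four properties intact.
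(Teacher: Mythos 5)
Your dual-graph organization of the level surfaces is sound as far as it goes (null-homologous components are exactly the separating ones by Lemma \ref{lem_connected_surface_fundamental_class}, the collapse map is $H_1$-surjective so $b_1(\Gamma)\le 1$, and a $b_1=1$ graph has a unique cycle consisting of the non-bridge edges), and routing $l$ along that cycle does deliver properties (1), (3), (4). But there is a genuine gap at property (2), and it is precisely the hard content of the theorem. Your construction only guarantees that $l$ crosses each principal component once, hence has \emph{winding} number one in $V$; the step ``compressing $g_1$ in the complement of $l$ as far as possible yields a meridian disk $D_0$ met by $l$ transversally in a single point'' assumes exactly what must be proved, namely that $l$ has \emph{wrapping} number one. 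Compressions of $g_1$ performed in $V\setminus l$ terminate at a surface that is incompressible in the complement of $l$, which need not be a disk, and nothing guarantees compressing disks disjoint from $l$ exist at all. Worse, the implication is false at this level of generality: winding number one does not imply $L=J\# J'$ (Mazur-type patterns have winding number one and wrapping number two, and their satellites are prime). Since your arcs $\alpha_a$ are arbitrary properly embedded arcs in the regions, bad choices produce an $l$ satisfying (1), (3), (4) for which (2) fails outright, so the proposal needs a mechanism forcing wrapping number one that it does not contain. This is exactly why the paper does not prove the theorem by abstract routing but imports it from \cite{Guo-Li}, where $l$ is built canonically (Definition 3.4, an explicit construction that can then be isotoped in $T$ to meet conditions 1, 3, 4) precisely so that the companionship $L=J\#J'$ can be established (their Lemma 4.3).

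There is also a smaller, fillable gap in your lower bound $b_1(\Gamma)\ge 1$. The meridional-slope conclusion via $\partial$-irreducibility of the nontrivial knot exterior is fine, but the ``short bookkeeping with Lemma \ref{lem_nonzero_fundamental_class_odd_components}'' does not follow: since $[\frh^{-1}(r_i)\cap V]=0\in H_2(V,T)$, each level has an \emph{even} number of principal components, and a level can meet $T$ in meridians while $\frh^{-1}(r_i)\cap V$ consists entirely of annuli, each with two essential boundary curves — essential curves present, yet no component with an odd count. To repair this, take the essential curve $c$ that is innermost on the sphere $\frh^{-1}(r_i)$ \emph{among essential curves}, and consider the component of $\frh^{-1}(r_i)\setminus T$ adjacent to $c$ inside $D_c$: it has exactly one essential boundary component, and it cannot lie in the exterior $E=S^3\setminus \inte(V)$, since its boundary would then be homologous in $T$ to a meridian, which generates $H_1(E)\cong\bZ$ and so cannot bound a surface in $E$; hence it lies in $V$ and Lemma \ref{lem_nonzero_fundamental_class_odd_components} makes it principal. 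That fixes the existence of a generator component, but it does not rescue property (2), which remains the essential missing idea.
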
 

\begin{proof}
The construction of $l$ is given in Definition 3.4 of \cite{Guo-Li}. The fact that $J$ is a connected sum component of $L$ is given in the proof of Lemma 4.3 in \cite{Guo-Li}. The discussion of \cite{Guo-Li} after Lemma 4.4 showed that $l$ can be isotoped in $T$ so that it satisfies Conditions 1,3,4 above.
\end{proof}

\begin{Theorem}[\cite{Zupan}]
\label{thm_knot_sum}
Suppose $K_1, K_2$ are two knots, then 
$$
\trunk(K_1\# K_2) = \max\{\trunk(K_1), \trunk(K_2)\}.
$$
\end{Theorem}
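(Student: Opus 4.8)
The plan is to prove the two inequalities
$\trunk(K_1\#K_2)\le \max\{\trunk(K_1),\trunk(K_2)\}$ and $\trunk(K_1\#K_2)\ge \max\{\trunk(K_1),\trunk(K_2)\}$ separately. For the upper bound I would exhibit an explicit representative by \emph{stacking}. Choose Morse representatives $k_1,k_2$ realizing $\trunk(K_1),\trunk(K_2)$ and isotope them, rescaling heights, so that $k_1$ lies in the region $\frh>0$ with a single global minimum and $k_2$ lies in $\frh<0$ with a single global maximum. Delete a short arc across the minimum of $k_1$ and across the maximum of $k_2$, and join the four resulting endpoints by two disjoint, strictly monotone, unlinked vertical strands running through the slab $\{-1\le\frh\le 1\}$; the result is an embedded circle $k$ representing $K_1\#K_2$. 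For a regular value $z$ one has $|\frh^{-1}(z)\cap k|=|\frh^{-1}(z)\cap k_1|\le\trunk(K_1)$ when $z>0$, $=|\frh^{-1}(z)\cap k_2|\le\trunk(K_2)$ when $z<0$, and $=2$ in the connecting slab (including near the modified extrema). Since $\trunk(K_i)\ge 2$ for every knot, all three cases are bounded by $\max\{\trunk(K_1),\trunk(K_2)\}$, which gives the upper bound.

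For the lower bound it suffices, by symmetry, to show $\trunk(K_1\#K_2)\ge \trunk(K_1)$; equivalently, from an arbitrary Morse representative $k$ of $K_1\#K_2$ I must produce a Morse representative of $K_1$ whose trunk does not exceed $\trunk(k)$. The connected-sum structure provides a decomposing $2$-sphere $S$ meeting $k$ transversely in exactly two points $p,q$ and splitting $S^3$ into balls $B_1,B_2$, with $(B_1,k\cap B_1)$ the $K_1$-tangle and $(B_2,k\cap B_2)$ the $K_2$-tangle. Writing $\alpha_1=k\cap B_1$ and $\alpha_2=k\cap B_2$, the knot $K_1$ is obtained by closing $\alpha_1$ with any boundary-parallel arc $\beta\subset B_2$ joining $p$ to $q$. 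At every regular level the set $\frh^{-1}(z)\cap(\alpha_1\cup\beta)$ is the disjoint union of $\frh^{-1}(z)\cap\alpha_1$ and $\frh^{-1}(z)\cap\beta$, so if $\beta$ can be chosen with $|\frh^{-1}(z)\cap\beta|\le|\frh^{-1}(z)\cap\alpha_2|$ for all $z$, then
$$
|\frh^{-1}(z)\cap(\alpha_1\cup\beta)|\le |\frh^{-1}(z)\cap\alpha_1|+|\frh^{-1}(z)\cap\alpha_2|=|\frh^{-1}(z)\cap k|\le \trunk(k),
$$
and $\alpha_1\cup\beta$ is the desired representative of $K_1$. Thus the entire lower bound reduces to constructing such a capping arc $\beta$.

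To build $\beta$ I would first isotope $k$ together with $S$ so that $\frh|_S$ is Morse, and among all decomposing spheres attainable without raising $\trunk(k)$ choose one that is as simple as possible with respect to the sweepout $\{\frh^{-1}(z)\}$ — for instance minimizing the number of saddles of $\frh|_S$, or $\int |S\cap\frh^{-1}(z)|\,dz$. Standard innermost-disk and outermost-arc surgeries on the circles and arcs of $S\cap\frh^{-1}(z)$ should then let me route $\beta$ through a thin neighborhood of $S$ inside $B_2$ as a boundary-parallel arc whose level-by-level count is dominated by that of the discarded tangle $\alpha_2$. I expect \textbf{this last step to be the main obstacle}: unlike the upper bound, where I design the representative, here $k$ and $S$ are arbitrary and $S$ may be badly positioned relative to $\frh$, so guaranteeing that capping off the $K_2$-side creates no new thick level is exactly the delicate point. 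I would therefore isolate it as a preliminary lemma — that a connected-sum sphere can be isotoped into a form compatible with $\frh$ without increasing the trunk — and deduce the inequality $\trunk(K_1\#K_2)\ge\trunk(K_1)$, and hence the theorem, from it; I anticipate this lemma genuinely requires the thin-position/sweepout machinery rather than a short cut-and-paste.
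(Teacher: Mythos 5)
Your upper-bound half (stacking the two Morse representatives and joining them by two monotone strands, using $\trunk(K_i)\ge 2$) is correct and is the standard argument. Note, however, that the paper does not prove this theorem at all: it is quoted directly from Davies--Zupan \cite{Zupan}, so there is no in-paper proof to match, and the comparison has to be with the published proof, which establishes the lower bound via Ozawa's machinery for meridionally essential surfaces (an essential decomposing sphere can be isotoped so that its intersections with level spheres consist only of curves essential in the punctured sphere, after which one counts). That machinery is exactly the ingredient your proposal postpones.

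The genuine gap is the capping-arc lemma you yourself flag: the existence of a boundary-parallel arc $\beta\subset B_2$ from $p$ to $q$ with $|\frh^{-1}(z)\cap\beta|\le|\frh^{-1}(z)\cap\alpha_2|$ for every regular $z$. This is not a technical remainder; it is essentially the whole theorem, and the tools you propose do not obviously deliver it. The one arc whose level counts are automatically dominated by those of $\alpha_2$ is a push-off of $\alpha_2$ itself, but that arc is not boundary-parallel when the $K_2$-tangle is knotted, and closing $\alpha_1$ with it reproduces $K_1\# K_2$ rather than $K_1$. Conversely, any $\beta$ routed through a collar of $S$ inherits its level behavior from $S$, not from $\alpha_2$, and an arbitrary decomposing sphere can meet a given level sphere in arbitrarily many curves; minimizing saddles of $\frh|_S$ or $\int|S\cap\frh^{-1}(z)|\,dz$ gives no pointwise comparison with $\alpha_2$. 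Innermost-disk surgery is also delicate here: surgering $S$ along level disks can produce spheres that are no longer decomposing spheres for the connected sum (or that separate off the wrong tangle), so one must argue that essential intersections persist --- which is precisely what Ozawa's theorem, used by Davies--Zupan, provides, and what the analogous construction of Guo--Li (cited in this paper as Theorem \ref{thm_circle_l_in_V}) provides in the satellite setting. As written, your proposal is a correct skeleton with the decisive step reduced to an unproven lemma, so it does not yet constitute a proof.
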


\section{Proof of the main result}

Now we prove Theorem \ref{thm_main}.  We will use the same notation from Section \ref{sec_preliminaries}.

For each $i=1,\dots,n-1$, we have $\partial V\cap \frh^{-1}(r_i)\neq\emptyset$ and the intersection is transverse. 

Let $\Sigma$ be a connected component of $\frh^{-1}(r_i)\cap V$. To simplify notation, we say that a connected component of $\partial \Sigma$ is \emph{essential}, if it is an essential curve on $T$; otherwise we say that the boundary component is \emph{inessential}. We say that $\Sigma$ is \emph{principal}, if the fundamental class $[\Sigma]\in H_2(V,T)$ is nonzero. By Lemma \ref{lem_nonzero_fundamental_class_odd_components}, $\Sigma$ is principal if and only if $\partial \Sigma$ has an odd number of essential components.

\begin{Lemma}
	\label{lem_component_in_Dc}
	Let $\Sigma$ be a connected component of $\frh^{-1}(r_i)\cap V$, and assume $c$ is an essential connected component of $\partial\Sigma$. Let $D_c$ be the disk bounded by $c$ on the sphere  $\frh^{-1}(r_i)$ whose interior is disjoint from $\Sigma$. Then there exists at least one component $\Sigma'$ of $\frh^{-1}(r_i)\cap V$ that is distinct from $\Sigma$, such that 
	\begin{enumerate}
		\item $\Sigma'\subset D_c$.
		\item $\Sigma'$ has at least one essential boundary.
	\end{enumerate}
\end{Lemma}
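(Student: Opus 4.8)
The plan is to probe the disk $D_c$ with the core circle $\tau(\hat j)$ of $V$ (which lies in $\inte V$ and represents $J$) and to locate $\Sigma'$ by a signed intersection count. Write $S=\frh^{-1}(r_i)$; this is a $2$--sphere meeting $T$ transversely, and the curves $S\cap T$ cut $S$ into the components of $S\cap V$ and of $S\setminus V$, so every component of $\frh^{-1}(r_i)\cap V$ lying in $D_c$ is a full region of this decomposition. First I would record two preliminary facts. Since $c$ is essential and, as noted in the proof of Lemma~\ref{lem_nonzero_fundamental_class_odd_components}, all essential curves on $T$ are parallel to $\partial D$, the curve $c$ is a meridian of $V$ and therefore bounds a meridian disk $\delta\subset\inte V$ with $\tau(\hat j)\cdot\delta=\pm1$. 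Moreover, the region of $D_c$ adjacent to $c$ lies in $S\setminus V$, because crossing $c$ out of $\Sigma\subset V$ leaves $V$; hence any component of $S\cap V$ inside $D_c$ is disjoint from $c$.

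The key step is the intersection count. The $2$--cycle obtained by gluing $D_c$ and $\delta$ along $c$ (with $\delta$ given the opposite orientation) is nullhomologous in $S^3$, whence $\tau(\hat j)\cdot D_c=\tau(\hat j)\cdot\delta=\pm1\neq0$. Because $\tau(\hat j)\subset\inte V$, every point of $\tau(\hat j)\cap D_c$ lies in the interior of some component of $S\cap V$ contained in $D_c$; summing the local signs region by region recovers $\tau(\hat j)\cdot D_c$, so at least one component $\Sigma'\subset D_c$ of $\frh^{-1}(r_i)\cap V$ satisfies $\tau(\hat j)\cdot\Sigma'\neq0$. This gives Condition~(1), and $\Sigma'\neq\Sigma$ since $\Sigma'\subset\inte D_c$ whereas $\Sigma\cap\inte D_c=\varnothing$.

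To obtain Condition~(2) I would upgrade the nonvanishing of $\tau(\hat j)\cdot\Sigma'$ to a homological statement. The intersection pairing $H_1(V)\times H_2(V,T)\to\bZ$ sends $([\tau(\hat j)],[D])$ to $1$, and $[\tau(\hat j)]$ generates $H_1(V)\cong\bZ$; hence $\tau(\hat j)\cdot\Sigma'\neq0$ forces $[\Sigma']\neq0$ in $H_2(V,T)$. By Lemma~\ref{lem_connected_surface_fundamental_class} the class $[\Sigma']$ is then a generator, i.e.\ $\Sigma'$ is principal, and by Lemma~\ref{lem_nonzero_fundamental_class_odd_components} it has an odd, in particular positive, number of essential boundary components. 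This is exactly Condition~(2).

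The main obstacle I anticipate is bookkeeping rather than conceptual: one must verify carefully that $c$ is genuinely a meridian bounding a disk meeting the core once (this is where the parallelism of the essential curves is used), and that the regions of $D_c$ lying in $V$ coincide with honest components of $\frh^{-1}(r_i)\cap V$, so that the signed count of $\tau(\hat j)\cdot D_c$ decomposes as a sum over the candidate surfaces. Once these identifications are in place, the remaining argument is a short application of the intersection pairing together with Lemmas~\ref{lem_connected_surface_fundamental_class} and~\ref{lem_nonzero_fundamental_class_odd_components}.
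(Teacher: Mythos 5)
Your route is genuinely different from the paper's (the paper argues by contradiction: if no such $\Sigma'$ existed, an innermost-disk argument would make $c$ bound a disk whose interior misses $V$, forcing $J$ to be the unknot), and everything in your write-up from the intersection count onward is correct. The gap is at the very first step, the claim that $c$ is a meridian of $V$. The proof of Lemma \ref{lem_nonzero_fundamental_class_odd_components} does not establish that every essential curve of $\frh^{-1}(r_i)\cap T$ is parallel to $\partial D$; it shows only that $[\partial\Sigma]$ equals $0$ or $\pm[\partial D]$, which pins down the slope of the essential boundary components only when $\Sigma$ is principal. The $\Sigma$ in the present lemma is an arbitrary component with an essential boundary curve: if $[\Sigma]=0$, its essential boundary components are mutually parallel but could a priori have any slope, and if $c$ is not a meridian it bounds no disk $\delta\subset V$, while $\tau(\hat j)\cdot D_c=\mathrm{lk}(\tau(\hat j),c)$ can vanish, so your probe detects nothing.

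This is not mere bookkeeping: your proof never invokes the non-triviality of $J$, yet the lemma is false for the unknot. (Take $V$ unknotted and a sphere meeting $T$ in two longitudes of $V$, so that the intersection with $V$ is a single annulus $\Sigma$; then $c$ is a longitude, $\mathrm{lk}(\tau(\hat j),c)=0$, and $D_c$ is a meridian disk of the complementary solid torus containing no component of the intersection with $V$ at all.) The assertion that every essential curve of $\frh^{-1}(r_i)\cap T$ is a meridian is indeed true when $J$ is non-trivial, but proving it requires exactly the input of the paper's proof: remove the inessential curves of $\inte(D_c)\cap T$ by an innermost-disk argument to produce a compressing disk for $T$, and use that $\partial$-compressibility of the exterior would force $J$ to be trivial, so the disk must lie in $V$. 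In other words, the step you deferred as routine is where the entire content of the lemma lives. Your argument can be salvaged in the only situation where the lemma is actually applied --- Proposition \ref{prop_intersection_bound_on_ri} assumes $z_i>0$, so some component of $\frh^{-1}(r_i)\cap V$ is principal, its essential boundary curves are meridians by the argument in Lemma \ref{lem_nonzero_fundamental_class_odd_components}, and all essential curves of $\frh^{-1}(r_i)\cap T$ are parallel to those --- but that hypothesis is not part of the lemma as stated, and you would need to say so explicitly.
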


\begin{proof}
Assume the contrary, then every component of $\inte(D_c)\cap T$ is an inessential simple closed curve on $T$, where $\inte(D_c)$ denotes the interior of $D_c$.
Let $S$ be the set of all possible embedded disks $D$ in $S^3$ such that 
\begin{enumerate}
	\item $\partial D = \partial D_c = c$, 
	\item There exists an open neighborhood $N(c)$ of $c$ such that $N(c)\cap D_c = N(c) \cap D$,
	\item $\inte(D)$ intersects $T$ transversely, and every component of $\inte(D)\cap T$ is inessential on $T$,
\end{enumerate}
Since $D= D_c$ satisfies the above conditions, the set $S$ is non-empty. Let $D$ be an embedded disk in $S$ that minimizes the value of $|\inte(D)\cap T|$.  

We show that $\inte(D)\cap T = \emptyset$. Assume the contrary, then by the assumptions, every connected component of $\inte(D)\cap T$ bounds a disk in $T$. We may take a component $c'$ such that it bounds an ``innermost disk''.  Namely,  $c'$ is a connected component of $\inte(D)\cap T $, and it bounds a disk $B_T$ in $T$ such that $\inte(B_T)\cap \inte(D)=\emptyset$. Since $c'$ is a simple closed curve on $D$, it bounds a disk $B_D$ in $D$. Perturbing  $(D\backslash B_D)\cup B_T$ yields an element $D'$ in $S$ such that $|\inte(D')\cap T|<|\inte(D)\cap T|$, which contradicts the definition of $D$. 

Since $\inte(D)\cap T = \emptyset$ and $N(c)\cap D_c = N(c) \cap D$ for some neighborhood $N(c)$ of $c$, we must have $\inte(D)\cap V = \emptyset$.

In conclusion, we proved that $c$ bounds a disk in $S^3$ whose interior is disjoint from $V$. Since $c$ is essential on $T$, this implies that $j$ is isotopic to the unknot, which contradicts the assumption on the non-triviality of $J$.
\end{proof}

We will need the following technical combinatorial lemma about surfaces on a sphere.
\begin{Lemma}
	\label{lem_combinatorics_on_sphere}
	Assume $s$ is a positive integer and $\Sigma_1,\dots,\Sigma_s$ are disjointly embedded compact surfaces in $S^2$. Assume for all $j=1,\dots,s$, each connected component of $\partial \Sigma_j$ is classified as either ``essential'' or ``inessential''. Let $n_j$ be the number of essential boundary components of $\Sigma_j$. 
	
	Assume that for every $j$ and every essential component $c$ of $\Sigma_j$, there exists $j'\neq j$ such that $n_{j'}>0$ and $\Sigma_{j'}\subset D_c$, where $D_c$ is the disk bounded by $c$ on $S^2$ whose interior is disjoint from $\Sigma_j$.
	
	Also assume that there exists at least one $j$ with $n_j>0$.
	
	Then
	$$
	\sum_{n_j \text{ is odd}} (2-n_j) > 0.
	$$
\end{Lemma}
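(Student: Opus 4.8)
The plan is to reduce everything to a tree built from the essential circles and then to run a counting argument in which the leaves of this tree are forced, by the hypothesis, to carry surfaces with exactly one essential boundary. Since the inessential boundary components play no role in either $n_j$ or the conclusion, I would first discard them and work only with the collection $E$ of all essential boundary circles of the $\Sigma_j$. These are disjointly embedded circles in $S^2$, so they cut $S^2$ into $|E|+1$ regions, and the adjacency graph $\mathcal{T}$ whose vertices are these regions and whose edges are the circles in $E$ is a tree. Because each $\Sigma_j$ is connected and no essential circle meets its interior, the interior of each $\Sigma_j$ lies in a single region $R_{(j)}$, and each of the $n_j$ essential circles of $\Sigma_j$ is an edge of $\mathcal{T}$ incident to $R_{(j)}$. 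Orienting every edge $c$ away from the region containing its owning surface and toward the region lying in $D_c$, the out-degree of a region $R$ equals $\sum_{R_{(j)}=R} n_j$, so $R$ contains a surface with positive $n_j$ precisely when its out-degree is positive.

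The crux is the following claim about the leaves of $\mathcal{T}$: every leaf region hosts exactly one of the $\Sigma_j$, and that surface has $n_j=1$. To see this, let $R$ be a leaf region with unique incident edge $c$; then $\overline{R}$ is a closed disk bounded by $c$ and containing no other essential circle. If $R$ were the region contained in $D_c$, then $\overline{R}=D_c$, and the hypothesis applied to $c$ would furnish a surface $\Sigma_{j'}$ with $n_{j'}>0$ inside $D_c$, whose essential boundary circle would have to be $c$ itself, contradicting that $c$ is the boundary of a surface sitting on the opposite side. Hence $R$ lies on the owning side of $c$, the surface owning $c$ lives in $R$, and since $\overline{R}$ meets no essential circle except $c$ this is the unique surface in $R$, with $n_j=1$. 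This is the only place the nesting hypothesis enters (through Lemma \ref{lem_component_in_Dc} and the non-triviality of $J$ in the application), and I expect it to be the main obstacle: one must argue carefully that a leaf region cannot be the $D_c$ side and that no essential circle is hidden in its interior. It yields $\#\{j:n_j=1\}\ge(\#\text{leaves})$.

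The remaining ingredient requires no hypothesis. For each region $R$ the positive integers $\{n_j: R_{(j)}=R\}$ sum to the out-degree of $R$, which is at most $\deg(R)$; since $\sum_i (a_i-2)^{+}\le(\sum_i a_i-2)^{+}$ for positive integers $a_i$, this gives $\sum_{R_{(j)}=R}(n_j-2)^{+}\le(\deg(R)-2)^{+}$. Summing over all regions and using the identity $\sum_{\deg(R)\ge 3}(\deg(R)-2)=(\#\text{leaves})-2$, valid for any finite tree with at least one edge, I obtain $\sum_{j}(n_j-2)^{+}\le(\#\text{leaves})-2$.

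Finally I would split the target sum as
$$\sum_{n_j\text{ odd}}(2-n_j)=\#\{j:n_j=1\}-\sum_{n_j\ge 3\text{ odd}}(n_j-2),$$
bound the subtracted term by $\sum_{n_j\ge 3\text{ odd}}(n_j-2)\le\sum_j(n_j-2)^{+}\le(\#\text{leaves})-2$, and bound $\#\{j:n_j=1\}\ge\#\text{leaves}$, concluding $\sum_{n_j\text{ odd}}(2-n_j)\ge 2>0$. Besides the leaf claim, the step needing the most care is the bookkeeping for regions containing several of the $\Sigma_j$ and for the empty sink regions, so that the orientation, the out-degree computation, and the per-region inequality are all simultaneously valid.
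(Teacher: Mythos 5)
Your proof is correct, and it takes a genuinely different route from the paper's. The paper argues by summing Euler characteristics over the full decomposition of $S^2$ into the surfaces $\Sigma_j$ and the complementary ``gaps'', using $\chi(S^2)=2$ together with the fact that every piece is planar (so $\chi=2-b$ with $b$ the number of boundary circles), and runs a three-way case analysis according to whether inessential boundaries exist and whether some component has no essential boundary. You instead discard the inessential circles at the outset, organize the essential circles into the dual tree of the regions they cut out, and split the target sum as $\#\{j: n_j=1\}-\sum_{n_j\ge 3 \text{ odd}}(n_j-2)$; the nesting hypothesis is used exactly once, in the leaf claim, and the rest is the handshake identity $\sum_{\deg R\ge 3}(\deg R-2)=(\#\text{leaves})-2$ together with the superadditivity $\sum_i(a_i-2)^{+}\le\bigl(\sum_i a_i-2\bigr)^{+}$. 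The two arguments are cousins --- the Euler-characteristic bookkeeping for a sphere cut by disjoint circles is essentially the tree identity $\sum_v(2-\deg v)=2$ in disguise --- but yours isolates more transparently where the hypothesis enters, avoids the case analysis on inessential boundaries entirely, and, like the paper's, actually yields the stronger bound $\sum_{n_j \text{ odd}}(2-n_j)\ge 2$. Two points worth making explicit in a full write-up: each $\Sigma_j$ must be connected for ``the interior of $\Sigma_j$ lies in a single region'' to hold (the paper's proof tacitly assumes this as well, and it is satisfied in the application, where the $\Sigma_j$ are connected components of $\frh^{-1}(r_i)\cap V$); and the degenerate case of a tree with a single edge is automatically excluded by your leaf argument, since both endpoints of that edge would have to lie on the owning side of the same circle.
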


\begin{proof}
	Regarding essential boundaries, there are three cases:
 \begin{enumerate}
     \item All boundaries are essential.
    In this case, we assert that every component $\Sigma_j$ has at least 1 boundary. If this is untrue, then there exists such $j$ that $n_j=0$. Because $\Sigma_j\subset S^2$, $\Sigma_j$ must cover the entire $S^2$, and since the components are disjoint, there can be no other components and thus the conclusion is trivial.

    Now for every even $n_j$, we have $n_j\ge 2$, so $2-n_j\le 0$.

    Let the connected components of $S^2/(\Sigma_1\cup...\cup\Sigma_s)$ be $T_1,\dots,T_k$, and we define these components as "gaps". Now for any gap $T_k$, it must have at least 2 boundaries. Otherwise, $T_k$ will be a disk whose interior does not have any other components, which contradicts the combinatorial setup. 

    Notice that every component and gap lie on $S^2$, so each of them has genus 0, which means each has an Euler characteristic, or $\chi$ value, of $2-b$, with $b$ being the number of boundaries. Thus, the $\chi$ value of every gap is no greater than 0. 

    Now we sum up the $\chi$ values of all the surfaces on $S^2$. Since $\chi(A\cup B)=\chi(A)+\chi(B)-\chi(A\cap B)$, and the intersection between any component and any gap is a loop (whose $\chi$ value is 0) if there exists such intersection, we have:
    
    \begin{center}
    $$
    \sum_{n_j\ is\ even} \chi(\Sigma_j) + \sum_{n_j\ is\ odd} \chi(\Sigma_j) + \sum \chi(T_k) = \chi(sphere) = 2.
    $$
    \end{center}
    Since all the boundaries are essential, $2-n_j=\chi(\Sigma_j)$. And we know that $2-n_j\le 0$ for even $n_j$, so 
    $$
	\sum_{n_j\ is\ even} \chi(\Sigma_j)\le 0.
    $$ Also, since $\chi(T_k)\le 0$ for every $k$, we have 
	$$
    \sum \chi(T_k)\le 0.
    $$ Therefore,$$
    \sum_{n_j\ is\ odd} \chi(\Sigma_j)\ge 2.
    $$
    which suffices to show that $$
    \sum_{n_j\ is\ odd} \chi(\Sigma_j)>0.
    $$
     \item There is at least one inessential boundary, and each connected component has at least 1 essential boundary.

    In this case, if a gap has only 1 boundary, then it must be inessential, or it will contradict the combinatorial setup. We call these gaps with only 1 boundary "disk gaps".

    Assume there are $d$ disk gaps. Since other gaps have at least 2 boundaries each, we have $\sum \chi(T_k)\le d$. In this case, we can still sum up the Euler characteristics of all the pieces and gaps on $S^2$:

    $$\sum_{n_j\ is\ even} \chi(\Sigma_j) + \sum_{n_j\ is\ odd} \chi(\Sigma_j) + \sum \chi(T_k) = \chi(sphere) = 2.$$
    
    Thus, $$\sum_{all}\chi(\Sigma_j)\ge 2-d.$$ Let $b$ be the total number of inessential boundaries in the pieces, then $$\sum_{all}(2-n_j)=\sum_{all}\chi(\Sigma_j)+b.$$
     
     Every disk cap has a boundary that belongs to a piece, and they sum up to $d$ inessential boundaries. Thus, $$\sum_{all}(2-n_j)\ge \sum\chi(\Sigma_j)+b\ge (2-d)+d=2.$$

     Since every component has at least 1 essential boundary, for $n_j$ even, $\Sigma_j$ has at least 2 essential boundaries. Thus, $$\sum_{n_j\ is\ even}(2-n_j)\le 0$$.

     Hence, $$\sum_{n_j\ is\ odd}(2-n_j)\ge 2-0=2>0.$$

     \item There is at least one inessential boundary, and there is at least 1 component with no essential boundaries.

     Let $t$ be the number of components without essential boundaries.
     
     The combinatorial setup states that any essential boundary bounds a disk disjoint from its component, and the disk has a component with at least 1 essential boundary in it. Thus, if we remove a component with 0 inessential boundaries, the presumption is still correct. Also, $$\sum_{n_j\ is\ odd}(2-n_j)$$ does not change because it only concerns components with an odd number of essential boundaries.

     Removing 1 such component, we have $t-1$ such components left. We can remove them one by one, and eventually the case will become the same as case 2, in which $$\sum_{n_j\ is\ odd}(2-n_j)>0$$ is true. Thus, this statement holds for case 3.
     
 \end{enumerate}
\end{proof}

\begin{Proposition}
	\label{prop_intersection_bound_on_ri}
	For each $i=1,\dots,n-1$, let $z_i$ be the number of principal components of $\frh^{-1}(r_i)\cap V$. If $N>0, z_i>0$, we have
	$$
	|\frh^{-1}(r_i) \cap k| > N \cdot z_i.
	$$
\end{Proposition}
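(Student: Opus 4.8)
The plan is to split $\frh^{-1}(r_i)\cap V$ into its connected components and bound, from below, the contribution of each \emph{principal} component to the total intersection number with $k$, then assemble these bounds using the combinatorial input of Lemma \ref{lem_combinatorics_on_sphere}. First I would record that $\frh^{-1}(r_i)$ is a $2$-sphere, being a regular level of $\frh$ away from the poles, so the components $\Sigma_1,\dots,\Sigma_s$ of $\frh^{-1}(r_i)\cap V$ are disjointly embedded compact orientable subsurfaces of a sphere, each with boundary on $T$. For each $j$ write $a_j$ for the number of essential boundary components of $\Sigma_j$ and $b_j=|k\cap\Sigma_j|$, so that $|\frh^{-1}(r_i)\cap k|=\sum_j b_j$, and, by Lemma \ref{lem_nonzero_fundamental_class_odd_components}, the principal components are precisely those $\Sigma_j$ with $a_j$ odd.

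The key per-component estimate comes from Lemma \ref{lem_essential_Thurston_norm_bound}. For a principal component $\Sigma_j$, the class $[\Sigma_j]$ is a generator of $H_2(V,T)$ by Lemma \ref{lem_connected_surface_fundamental_class}, and $\Sigma_j$ meets $k$ transversely since $r_i$ is a regular value; hence $\max\{a_j+b_j-2,0\}\ge N$. Since $N>0$, this forces $a_j+b_j-2\ge N$, that is $b_j\ge N+2-a_j$. Summing over the $z_i$ principal components gives
$$
\sum_{a_j\text{ odd}} b_j \ \ge\ z_i N + \sum_{a_j\text{ odd}}(2-a_j).
$$

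To control the last sum I would invoke Lemma \ref{lem_combinatorics_on_sphere} with $S^2=\frh^{-1}(r_i)$, the surfaces $\Sigma_1,\dots,\Sigma_s$, and $n_j=a_j$. Its hypotheses are exactly what the earlier results supply: the containment condition on essential boundary curves is the content of Lemma \ref{lem_component_in_Dc}, and the requirement that some $n_j>0$ holds because $z_i>0$, a principal component having an odd, hence positive, number of essential boundaries. The lemma then yields $\sum_{a_j\text{ odd}}(2-a_j)>0$, and since this is an integer it is at least $1$. Combining,
$$
|\frh^{-1}(r_i)\cap k| = \sum_j b_j \ \ge\ \sum_{a_j\text{ odd}} b_j \ \ge\ z_i N + 1 \ >\ N\cdot z_i,
$$
which is the desired strict inequality.

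The real work is not in this final assembly but in the two inputs it rests on — the Thurston-norm bound $a_j+b_j-2\ge N$ per principal component and the sphere combinatorics $\sum_{a_j\text{ odd}}(2-a_j)>0$ — both of which are already established. Given those, the only points needing care are the orientability and transversality of each component, which are immediate since the components lie inside a sphere and $r_i$ is regular, and the preservation of \emph{strict} inequality: it survives precisely because the combinatorial sum is a positive integer, contributing the extra $+1$ that upgrades $\ge N z_i$ to $>N z_i$.
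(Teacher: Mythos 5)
Your proposal is correct and follows essentially the same route as the paper: the per-component bound $b_j \ge N+2-a_j$ from Lemma \ref{lem_essential_Thurston_norm_bound} (using $N>0$ to discard the $\max$), the combinatorial input $\sum_{a_j \text{ odd}}(2-a_j)>0$ from Lemmas \ref{lem_component_in_Dc} and \ref{lem_combinatorics_on_sphere}, and the same summation over principal components. The only cosmetic difference is that you invoke integrality to get the extra $+1$, whereas the paper obtains strictness directly from the positivity of the combinatorial sum; both are fine.
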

	
\begin{proof}
	Let $\Sigma_1,\dots, \Sigma_s$ be the connected components of $\frh^{-1}(r_i)\cap V$. Let $n_j$ be the number of essential components of $\Sigma_j$.
	Then by Lemma \ref{lem_nonzero_fundamental_class_odd_components}, Lemma \ref{lem_essential_Thurston_norm_bound}, and Lemma \ref{lem_component_in_Dc}, the following statements hold for each $j=1,\dots,s$. 
	\begin{enumerate}
		\item $\Sigma_j$ is principal if and only if it has an odd number of essential boundaries. 
		\item If $\Sigma_j$ is principal, then 
		\begin{equation}
			\label{eqn_in_prop_intersection_bound_Sigma_j_norm}
		\max\{ |\Sigma_j\cap k| + n_j -2, 0\} \ge N.
		\end{equation}
		\item If $c$ is an essential boundary component of $\Sigma_j$, and $D_c$ is the disk bounded by $c$ on $\frh^{-1}(r_i)$ whose interior is disjoint from $\Sigma_j$, then there exists $j'\neq j$, such that 
		\begin{enumerate}
			\item $\Sigma_{j'}\subset D_c$,
			\item $\Sigma_{j'}$ has at least one essential boundary component.
		\end{enumerate}
	\end{enumerate}

For each $j$ such that $n_j$ is odd, we have $\Sigma_j$ is principal. Since we assume $N>0$, inequality \eqref{eqn_in_prop_intersection_bound_Sigma_j_norm} above implies that
$$
|\Sigma_j\cap k|+n_j-2\ge N.
$$ 
By Lemma \ref{lem_combinatorics_on_sphere}, we have
$$
\sum_{{n_j} \text{ is odd}} (2-n_j) >0.
$$
Therefore,
$$
	|\frh^{-1}(r_i) \cap k| \ge \sum_{{n_j} \text{ is odd}}|\Sigma_j\cap k| \ge \sum_{{n_j} \text{ is odd}}(N+2-n_j) >  \sum_{{n_j} \text{ is odd}} N = N\cdot z_i.
$$
\end{proof}	

Now we can prove Theorem \ref{thm_main} using Proposition \ref{prop_intersection_bound_on_ri}. 

\begin{proof}[Proof of Theorem \ref{thm_main}]
	The statement of the theorem is obvious if $N=0$. From now, we assume $N>0$. 
	
	Recall that the embedded circle $l\subset V$ satisfies the statements of Theorem \ref{thm_circle_l_in_V}.
By definition, there exists $i\in\{1,\dots,n-1\}$ such that 
$$|\mathfrak{h}^{-1}(r_i)\cap l| = \trunk(l)>0.$$
By Proposition \ref{prop_intersection_bound_on_ri} and Parts (3), (4) of Theorem \ref{thm_circle_l_in_V}, we have
$$
|\mathfrak{h}^{-1}(r_i)\cap k| > N\cdot |\mathfrak{h}^{-1}(r_i)\cap l| = N\cdot \trunk(l).
$$
By the definition of trunk number, we have
$$
\trunk(k) \ge |\mathfrak{h}^{-1}(r_i)\cap k|.
$$
Recall that $L$ denotes the isotopy class of $l$.  By Theorem \ref{thm_circle_l_in_V} Part (2) and Theorem \ref{thm_knot_sum}, we have
$$
\trunk(l)\ge \trunk(L)\ge \trunk(J).
$$
Therefore, combining the above inequalities, we have
\begin{equation}
	\label{eqn_lower_bound_trunk_k}
\trunk(k)> N\cdot \trunk(J).
\end{equation}
Since \eqref{eqn_lower_bound_trunk_k} holds for every embedded circle $k$ representing the isotopy class of $K$, we conclude that $\trunk(K) > N\cdot \trunk(J)$. 
\end{proof}

\end{document}